\newtheorem{thrm}{Theorem}[section]
\theoremstyle{definition}
\numberwithin{equation}{section}
\begin{document}
\title[elastodynamics model]{ Riemann type initial boundary value problem for  a system in elastodynamics }
\author[K. Divya Joseph and  P. A  Dinesh \\ ]
{Kayyunnapara  Divya Joseph$^1$ and  P. A  Dinesh$^2$, \\}

\address{Department of Mathematics,MS Ramaiah Institute of Technology
MSRIT P. O,Bangalore 560054, India.\\}

\email{divyakj@msrit.edu, dineshdpa@msrit.edu}
\thanks{$^{1,2}$Department of Mathematics,M.S. Ramaiah Institute of Technology,
MSRIT P. O,Bangalore 560054, India.
Email: $^1$divyakj@msrit.edu,+917259149455,  $^2$dineshdpa@msrit.edu,+91984548356 }
\subjclass[2000]{35A20, 35L50,35R05}
\keywords{elastodynamics, Riemann problem, nonconservative, hyperbolic}


\begin{abstract}
This paper is a continuation of our previous paper \cite{d1} on  the initial boundary value problem for a nonconservative system appearing in elastodynamics in the space time domain $x>0,t>0$. There, the  initial and boundary data were assumed to  lie on the
 level sets of one of the Riemann invariants where as in this paper we  consider the general initial and boundary data of Riemann type, formulate the boundary value  problem based on the Riemann problem and   construct explicitly the solution.
\end{abstract}
\maketitle

\section{Introduction} \label{sect1}
In this article, we study a  system of two equations, appearing in elastodynamics, which was studied in our previous paper\cite{d1}  namely,
\begin{equation}
\begin{gathered}
\frac{\partial u}{\partial t }+ u \frac{\partial u}{\partial x} - \frac{\partial{\sigma}}{\partial x} =
0,\\
\frac{\partial{\sigma}}{\partial t} + u \frac{\partial{\sigma}}{\partial x} - k^2
\frac{\partial{u}}{\partial x} = 0.
\end{gathered}
\label{e1.1}
\end{equation}
Here, $u$ is the velocity, $\sigma$ is the stress and $k>0$
is the speed of propagation of the elastic waves and the system corresponds to the coupling of a dynamical law and a Hooke's law in a homogeneous medium in which density varies slightly in the neighborhood of a constant value.

Initial value problem for  the system \eqref{e1.1} 
is well studied by many authors \cite{c1,c2,j2}, however the boundary value problem is not understood well.  In this paper, we study initial boundary value problem for \eqref{e1.1}, in $x>0,t>0$ with initial 
conditions
\begin{equation}
( u(x,0),\sigma (x, 0) ) = (u_0(x) , \sigma _0(x) ),\,\,\,
x > 0
\label{e1.2}
\end{equation}
and a weak form of the  boundary conditions,
\begin{equation}
( u(0,t),\sigma (0,t) ) = (u_b(t) , \sigma _b(t)
),\,\,\,t>0.
\label{e1.3}
\end{equation}

When  the initial and boundary data were assumed to  lie on the
 level sets of one of the Riemann invariants, we used the vanishing viscosity method to construct solution in \cite{d1}. The aim of the present paper is to  study  the general initial and boundary data of Riemann type and formulate the boundary value 
problem based on the Riemann problem  and construct an explicit solution. Solution of Riemann type data is fundamental because, this is used in the construction of solutions of  initial boundary value problem for the  general initial and bounday data using Glimms scheme and various difference approximations .

We write the system \eqref{e1.1} in the form,
\begin{equation*}
\begin{bmatrix}
u\\  
\sigma
\end{bmatrix} 
_t +
A(u,\sigma)
\begin{bmatrix}
u \\
\sigma
\end{bmatrix} 
_x
=0
\end{equation*}
with the matrix corresponding to the system,
\begin{equation*}
A(u,\sigma) =
\begin{bmatrix}
u & -1\\
-k^2 & u
\end{bmatrix}.
\end{equation*}

The eigenvalues of $A(u,\sigma)$ are called the
characteristic speeds of system \eqref{e1.1}. A simple
computation
shows that, the equation for the eigenvalues are given
by
\begin{equation*}
(\lambda - u )^2  - k^2 =0
\end{equation*}
which has two real distinct roots,
\begin{equation*}
\lambda_1 = u-k ,\,\,\,\lambda_2 = u + k.
\label{e1.4}
\end{equation*} 
So the system is strictly hyperbolic. 
The curves defined by $\frac{dx}{dt}= \lambda_1$ and  $\frac{dx}{dt}= \lambda_2$ are called  the $1-$characteristic curve and $2-$characteristic curve respectively. 
Call $\ r_1 $ and $\ r_2 $ right eigenvectors
corresponding to $ \lambda_1 $ and $ \lambda_2 $
respectively. An
 easy computation shows
 \begin{equation*}
r_1 = r_1(u,\sigma)=(1,k),\,\,\,r_2= r_2 (u,
\sigma)=(1,-k)
\end{equation*}
We say $w_i :R^2 \rightarrow R$ is a  $i$-Riemann
invariant if 
\begin{equation*}
Dw_{i}(u,v) . r_i(u,\sigma)=0, 
\end{equation*}
where $ D w_i(u,v)=(w_{iu},w_{i \sigma})$, is the  the gradient of $w_i$ w.r.t. $(u,\sigma)$.  Explicit global Riemann invariants are given by 
\[
w_1(u,\sigma) =\sigma - k u, \,\,\
w_2(u,\sigma)=\sigma+  k u.
\]
 Now consider the case 
where $u_0$, $\sigma_0$,  $u_b$, $\sigma_b$ are all constants.
 The equation \eqref{e1.1} with the initial and boundary conditions \eqref{e1.2}-\eqref{e1.3}
does not have a solution in general for the following reason. Let
$w_1$ denotes the 1 - Riemann invariant, and  $w_2$ denotes the
2 - Riemann invariant. Then for smooth solutions, \eqref{e1.1} is equivalent to the system
\begin{equation*}
\begin{gathered}
\frac{\partial w_1}{ \partial t} + (u+k) \frac{
\partial w_1}{ \partial x} = 0, \\
\frac{\partial w_2 }{\partial t} + (u-k) \frac{
\partial w_2 }{ \partial x} = 0.
\end{gathered}
\label{e2.4}
\end{equation*}
The initial and boundary conditions  \eqref{e1.2} and  \eqref{e1.3} become
\begin{equation*}
\begin{aligned}
w_1(x,0)&=w_{10}(x)=\sigma_0 - k u_0, \\
w_2(x,0)&=w_{20}(x)=\sigma_0 +  k u_0, \\
w_1(0,t)&=w_{1b}(t)=\sigma_b - k u_b, \\
w_2(0,t)&=w_{2b}(t)=\sigma_b +  k u_b. \\
\end{aligned}
\label{e2.5}
\end{equation*}
Now let $\beta_1$ be a 1- characteristic curve,
$\frac{d x}{d t} = u - k$ and $\beta_2$ be a 2- characteristic curve,
$\frac{d x}{d t} = u + k$
then we have seen in \cite{d1} that 
 $w_2$  is constant along the 1-characteristic curve and
$w_1$ is constant along $2$ characteristc curve. There are three cases.

Case 1: $u(0,t)+k <0$. Then $u(0,t)-k <0$ and hence,  no condition required on $x = 0$

Case 2:  $ u(0,t)- k < 0 $ and $ u(0,t) + k >0 $. Here, only one condition is 
required.
Case 3: $ u - k > 0 $ , then $ u + k >0 $ and so, both $\sigma(0,t)$ and $u(0,t)$ must be prescribed.
Since the speeds $\lambda_1=u-k,\lambda_2=u+k$, depend on the unknown $u$, we do not know appriori
the sign of $\lambda_i, i=1,2$ at the boundary $x=0$. So we cannot proceed as in the linear case.

Classically, we need to take into account the small scale effects introduced by vanishing viscosity approximation as in \cite{jl} for hyperbolic systems of conservation laws
or as  in \cite{d1} for the special case where the initial and boundary data are in one of the level sets of Riemann invariants. 
The system \eqref{e1.1} is nonconservative, strictly
hyperbolic system with genuinely nonlinear characteristic fields. It is well known that smooth global in time
solutions does not exist even if the initial data
is smooth. Since the system is nonconservative any
discussion of well-posedness of solution should be based
on a given nonconservative product in addition to
admissibility criterion for shock discontinuities.
 In this paper, we take 
the Volpert nonconservative product and Lax admissibility
conditions for shocks.
 First we recall
some known facts about the Riemann problem for
\eqref{e1.1}, see \cite{j2}. Here, the initial data takes the form
\begin{equation*}
(u(x,0),\sigma(x,0)) = \begin{cases}
(u_{-},\sigma_{-}),&\text{if } x< 0,\\
 (u_{+},\sigma_{+}),&\text{if } x>0. 
\end{cases}
\label{e2.6}
\end{equation*}
A shock wave is a weak solution of \eqref{e1.1}, with
speed $s$ is of the form

\begin{equation*}
(u(x,0),\sigma(x,0)) = \begin{cases}
(u_{-},\sigma_{-}),&\text{if } x < s t,\\
 (u_{+},\sigma_{+}),&\text{if } x > s t. 
\end{cases}
\label{e2.7}
\end{equation*}
When Volpert product is used, 
the Rankine Hugoniot condition takes the form
\begin{equation*}
 \begin{gathered}
-s(u_{+}-u_{-})+\frac{u_{+}^2 - u_{-}^2}{2}
-(\sigma_{+}-\sigma_{-})=0\\
-s(\sigma_{+}-\sigma_{-})+\frac{u_{+}+u_{-}}{2}(\sigma_{+}-\sigma_{-})-k^2(u_{+}-u_{-})
\label{e2.8}
\end{gathered}
\end{equation*}
A $j$ - rarefaction wave is a continuous solution of \eqref{e1.1} of the form
\begin{equation*}
(u(x,t),\sigma(x,t)) = \begin{cases}
(u_{-},\sigma_{-}),&\text{if } \,\,\, x<\lambda_j(u_{-},\sigma_{-})  t,\\
(\bar{u}(x/t),\bar{\sigma}(x/t)),&\text{if} \,\,\, x< \lambda_j(u_{-},\sigma_{-}) t< x< \lambda_j(u_{+},\sigma_{+}) t,\\
 (u_{+},\sigma_{+}),&\text{if }\,\,\, x>\lambda_j(u_{+},\sigma_{+})t. 
\end{cases}
\label{e2.9}
\end{equation*}
 and  $\lambda_j(\bar{u}(\xi),\bar{\sigma}(\xi))$
is an increasing function of $\xi$. In this case  system \eqref{e1.1} can be written as,
\[ 
(A(u,\sigma)-\xi I)(\partial_\xi u,\partial_\xi \sigma)^t =0.
\]
There are two non-trivial solutions for it namely,   $\xi=\lambda_j(u,\sigma),(\partial_\xi u,\partial_\xi \sigma)^t=r_j(u,\sigma)$   for  $j=1,2$,
where, $r_1(u,\sigma)=(1,  k)^t, r_2(u,\sigma)=(1, - k)^t$. Since $\lambda_j = u+ (-1)^j k, j=1,2$, we get,
 $u(\xi)=\xi +k, \sigma(\xi)=k\xi +c$ corresponding to $\lambda_1$   and 
   $u(\xi)=\xi - k, \sigma(\xi)=- k\xi +c$ corresponding to $\lambda_2$.
Here $c$ is the arbitary constant.

In \cite{j2}, the Riemann problem was solved using Volpert
product and with Lax's admissibility for shocks. It was
shown that
corresponding to each characteristic family $\lambda_j,
j=1,2$ we can define shock waves and rarefaction waves.
Fix a state $(u_{-},\sigma_{-})$.
The set of states $(u_{+},\sigma_{+})$ which can be
connected by a single $j$- shock wave is a straight line
called
j-shock curve and is denoted by $S_j$ and the states which
can be connected by a single $j$-rarefaction wave is a
straight line is called $j$ rarefaction curve
and is denoted by $R_j$.
They are given by
\begin{equation*}
\begin{gathered}
R_1(u_{-},\sigma_{-}): \sigma=\sigma_{-}+k(u-u_{-}),
u>u_{-}\\
S_1(u_{-},\sigma_{-}): \sigma=\sigma_{-}+k(u-u_{-}),
u<u_{-}\\
R_2(u_{-},\sigma_{-}): \sigma=\sigma_{-}-k(u-u_{-}),
u>u_{-}\\
S_2(u_{-},\sigma_{-}): \sigma=\sigma_{-}-k(u-u_{-}),
u<u_{-}.\\
\label{e2.10}
\end{gathered}
\end{equation*}
Further $j$- shock speed $s_j$is given by 
\begin{equation*}
 s_j =\frac{u_{+}+u_{-}}{2} +(-1)^j k,\,\,\,j=1,2
\label{e2.11} 
\end{equation*}
The Lax entropy condition requires that the $j$- shock
satisfies inequality
\begin{equation*}
\lambda_j(u_{+},\sigma_{+}) \leq s_j \leq
\lambda_j(u_{-},\sigma_{-}).
\label{e2.12}
\end{equation*}
These curves fill in the $u-\sigma$ plane and the Riemann
problem can be solved uniquely for arbitrary initial
states
$(u_{-},\sigma_{-})$ and $(u_{_+},\sigma_{+})$
in the class of self similar functions consisting of
solutions of shock waves and rarefaction waves
separated by constant states. These constant states are
obtained from the shock curves and rarefaction curves
corresponding to the two families of the characteristic
fields.

\section{Admissible boundary values based on Riemann
problem}\label{2}
To formulate admissible boundary data, we analysed a special case  in \cite{d1},
when $u_b$ and $u_0$ lie on the $j$-Riemann invariant. There, we used vanishing viscosity approximation and the Hopf-Cole transformation.
For a more general initial and bounday data, this method does not apply.

Following \cite{jl}, we define admissible boundary values for the general Riemann boundary value problem for system \eqref{e1.1} in the following way.
\begin{equation*}
\mathcal{ER}(u_b,\sigma_b) =
\lbrace R((u_b,\sigma_b),(\bar{u},\bar{\sigma}))(\frac{x}{t} )\mid_{x=0}   \,\,\,\, : ( \bar{u},\bar{\sigma}) \in R^2  \rbrace 
\end{equation*}
where
$R((u_b,\sigma_b),(\bar{u},\bar{\sigma}))(\frac{x}{t} )$ denotes the solutions to the Riemann type initial  value problem for system \eqref{e1.1} with initial data $(u_b,\sigma_b)$ on the left and $(\bar{u},\bar{\sigma})$ on the right.

 In the following theorem we construct solution of intial boundary value problem with boundary conditions based on the Riemmann problem for
arbitrary constant initial and bounday data with no restrictions on the size of the data.

We need to consider different cases depending the position of  $(u_0,\sigma_0)$  relative to $(u_b,\sigma_b)$ . Fix  $(u_b,\sigma_b)$, and consider the wave  curves 
\begin{equation*}
\begin{gathered}
R_1(u_{b},\sigma_{b})= \{(u,\sigma): \sigma=\sigma_{b}+k(u-u_{b}),
u>u_{b}\}\\
S_1(u_{b},\sigma_{b})= \{(u,\sigma): \sigma=\sigma_{b}+k(u-u_{b}),
u<u_{b}\}\\
R_2(u_{b},\sigma_{b})=\{(u,\sigma): \sigma=\sigma_{b}-k(u-u_{b}),
u>u_{b}\}\\
S_2(u_{b},\sigma_{b})=\{(u,\sigma): \sigma=\sigma_{b}-k(u-u_{b}).
u<u_{b}\}\\
\label{e2.10}
\end{gathered}
\end{equation*}
passing through $(u_b,\sigma_b)$. These curves divide the $u-\sigma$ plane into 4 regions. 
 Let  $\Gamma_1$ denote the region between $R_1(u_b,\sigma_b)$ and  $R_2(u_b,\sigma_b)$,  $\Gamma_2$ be the region between $R_2(u_b,\sigma_b)$ and  $S_1(u_b,\sigma_b)$, $\Gamma_3$ be the region between $S_1(u_b,\sigma_b)$ and  $S_2(u_b,\sigma_b)$, $\Gamma_4$ be the region between $S_2(u_b,\sigma_b)$ and  $R_1(u_b,\sigma_b)$.
\begin{figure}[!h]
\includegraphics[width=7cm,height=7cm]{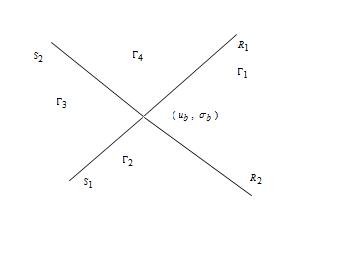}
\end{figure}

\begin{thrm}\label{6}
There exists a solution to \eqref{e1.1}  with initial condition $(u(x,0),\sigma(x,0) =( u_0,\sigma_0)$ and boundary condition   $(u(0+,t),\sigma(x+,0)) \in  \mathcal{ER}(u_b,\sigma_b)$, for any constant states  $(u_b,\sigma_b), ( u_0,\sigma_0)$. The solution consists of simple shock waves and rarefaction waves seperated by constant states.
\end{thrm}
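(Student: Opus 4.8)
The plan is to reduce the initial boundary value problem to the already-understood full-line Riemann problem, exploiting the way the admissible boundary set $\mathcal{ER}(u_b,\sigma_b)$ is defined. Given the constant states $(u_b,\sigma_b)$ and $(u_0,\sigma_0)$, I would first solve the Riemann problem for \eqref{e1.1} on the whole line with left state $(u_b,\sigma_b)$ and right state $(u_0,\sigma_0)$. By the facts recalled from \cite{j2}, this problem has a unique self-similar admissible solution $W(x/t)$, built from the curves $S_j,R_j$ with Volpert product and Lax condition, consisting of a $1$-wave and a $2$-wave separated by a single intermediate constant state $M=(u_M,\sigma_M)$, with $W(-\infty)=(u_b,\sigma_b)$ and $W(+\infty)=(u_0,\sigma_0)$. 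The candidate solution of the boundary value problem is then the restriction of $W(x/t)$ to the quarter plane $x\ge 0,\ t>0$.

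Next I would verify the three requirements. In $x>0,\ t>0$ the restriction still solves \eqref{e1.1} in the weak nonconservative sense, since $W$ does so everywhere and the Rankine--Hugoniot and Lax conditions are unaffected by restriction. For the initial data, fixing $x>0$ and letting $t\to 0+$ gives $x/t\to+\infty$, so $W\to(u_0,\sigma_0)$. For the boundary condition, the trace at $x=0$ equals $W(0+)=R((u_b,\sigma_b),(u_0,\sigma_0))(\tfrac{x}{t})\mid_{x=0}$, which lies in $\mathcal{ER}(u_b,\sigma_b)$ by its very definition, taking $(\bar u,\bar\sigma)=(u_0,\sigma_0)$. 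This is the crux: the boundary set is engineered precisely so that the trace of the restricted Riemann solution is automatically admissible.

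To make the solution explicit and confirm the description as shocks and rarefactions separated by constant states, I would run the case analysis according to which region $\Gamma_i$ contains $(u_0,\sigma_0)$. The intermediate state is the intersection of the $1$-wave line of slope $+k$ through $(u_b,\sigma_b)$ with the $2$-wave line of slope $-k$ through $(u_0,\sigma_0)$; explicitly $u_M=\tfrac{\sigma_0-\sigma_b}{2k}+\tfrac{u_b+u_0}{2}$ and $\sigma_M=\sigma_b+k(u_M-u_b)$. Using the identities $u_M-u_b=\tfrac{w_2(u_0,\sigma_0)-w_2(u_b,\sigma_b)}{2k}$ and $u_0-u_M=\tfrac{w_1(u_b,\sigma_b)-w_1(u_0,\sigma_0)}{2k}$, the signs of these differences fix the wave types, yielding $R_1R_2$ on $\Gamma_1$, $S_1R_2$ on $\Gamma_2$, $S_1S_2$ on $\Gamma_3$ and $R_1S_2$ on $\Gamma_4$. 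In each region I would then read off which portion survives in $x>0$ by comparing the shock speeds $s_j=\tfrac{u_++u_-}{2}+(-1)^jk$ and the rarefaction fan speeds $\lambda_j=u+(-1)^jk$ with zero, so that the boundary trace is the state separating the waves of negative speed from those of positive speed.

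I expect the main obstacle to be the bookkeeping in this last step rather than any single hard estimate: for each of the four regions one must split into subcases according to the signs of the two wave speeds, and, for rarefactions, according to whether the fan straddles $\xi=0$, and in every subcase check that the waves retained in $x>0$ together with the trace form a consistent admissible solution. The conceptual content, namely that restricting the whole-line Riemann solution automatically meets the boundary requirement, is immediate from the definition of $\mathcal{ER}(u_b,\sigma_b)$; the real work lies in verifying the Lax inequalities for the retained shocks in each configuration and in exhibiting the resulting wave patterns explicitly.
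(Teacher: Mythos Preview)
Your proposal is correct and follows essentially the same approach as the paper: solve the full-line Riemann problem with left state $(u_b,\sigma_b)$ and right state $(u_0,\sigma_0)$, restrict to $x>0$, and then carry out the case analysis over the regions $\Gamma_i$ together with the signs of the relevant wave speeds to exhibit the solution explicitly. The paper does precisely this bookkeeping in eight main cases (four for $(u_0,\sigma_0)$ on a wave curve and four for the open regions $\Gamma_1,\ldots,\Gamma_4$), each split into subcases by the signs of the $1$- and $2$-wave speeds at $x=0$; your formula for the intermediate state and your assignment of wave types to the regions agree with theirs.
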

\begin{proof}:   We need to  analyse several cases.\\
Case 1:  $(u_0,\sigma_0) \in R_1(u_b,\sigma_b)$:
It is clear that \eqref{e1.1} with\eqref{e1.2} and \eqref{e1.3} have a 1- rarefaction solution if $(u_b, \sigma_b)$ and $(u_0,\sigma_0)$ are related by 
$\sigma_0=\sigma_b + k (u_0 - u_b)$ with $u_0 > u_b$.   For $(u_b - k)t<x<(u_0 - k)t$, 
$u=\xi +k$ gives $u(x,t)=\frac{x}{t} +k$ and 
$\sigma_0= k\xi +c= k(u_0 - k) +c,\sigma_0=\sigma_b + k (u_0 - u_b)$ give $  k(u_0 - k) +c=\sigma_b + k (u_0 - u_b)$ and so $c= \sigma_b - k (u_b - k)$. So, $\sigma(x,t)= k \frac{x}{t} +  \sigma_b- k (u_b -k)$.
That is, $u(x,t)=\frac{x}{t} +k$ and $\sigma(x,t)= k \frac{x}{t} +  \sigma_b- k (u_b -k)$, for $(u_b - k)t<x<(u_0 - k)t$. Three different subcases occur. \\
 Case  (1a) :  $u_b - k>0,u_0- k>0$, \\
\begin{equation*}
(u(x,t), \sigma(x,t))=\begin{cases}
(u_b, \sigma_b) &\text{if  } 0<x<(u_b - k)t, \\
(\frac{x}{t} +k , k \frac{x}{t} +  \sigma_b - k (u_b -k)) &\text{if  } (u_b - k)t<x<(u_0 - k)t ,x>0, \\
(u_0, \sigma_0) &\text{if   } x>(u_0 - k)t .
\end{cases}
\end{equation*}\\
Case (1b) :  $u_b - k<0,u_0 - k<0,$ \\
\begin{equation*}
(u(x,t),\sigma(x,t))=(u_0,\sigma_0) .
\end{equation*} \\
Case (1c) : $u_b - k<0,u_0-k>0,$ \\
\begin{equation*}
(u(x,t), \sigma(x,t))=\begin{cases}
(\frac{x}{t} +k , k \frac{x}{t} +  \sigma_b -  k (u_b - k)) &\text{if } 0<x<(u_0 - k)t , \\
(u_0,\sigma_0) &\text{if  } x>(u_0 - k)t .
\end{cases}
\end{equation*}\\

Case 2 :  $(u_0,\sigma_0) \in R_2(u_b ,\sigma_b)$ .
 Now,\eqref{e1.1} with\eqref{e1.2} and \eqref{e1.3} have a $2$- rarefaction solution if $(u_b ,\sigma_b)$ and $(u_0,\sigma_0)$ are related by 
$\sigma_0=\sigma_b - k (u_0 - u_b)$ with $u_0 > u_b$.
For $(u_b + k)t<x<(u_0 + k)t$, 
$u=\xi - k$ gives $u(x,t)=\frac{x}{t} - k$ and 
$\sigma_0= - k\xi +c= - k(u_0 + k) +c,\sigma_0=\sigma_b - k (u_0 - u_b)$ give $  k(u_0 + k) +c=\sigma_b + k (u_0 - u_b)$ and so $c= \sigma_b - k (u_b + k)$. So, $\sigma(x,t)=   k \frac{x}{t}  +c = k \frac{x}{t} +  \sigma_b+k (u_b +k)$.
That is, $u(x,t)=\frac{x}{t} - k$ and $\sigma(x,t)= k \frac{x}{t} +  \sigma_b+ k (u_b +k)$, for $(u_b + k)t<x<(u_0 + k)t$.\\
We see corresponding solutions for the three subcases below. \\
Case (2ai) : If  $u_b+k>0,u_0+k>0,$ \\
\begin{equation*}
(u(x,t), \sigma(x,t))=\begin{cases}
(u_b, \sigma_b) &\text{if } 0<x<(u_b + k)t, \\
(\frac{x}{t} - k , - k \frac{x}{t} +  \sigma_b + k (u_b +k)) &\text{if } (u_b + k)t<x<(u_0 + k)t, \\
(u_0,\sigma_0) &\text{if } x>(u_0 +  k)t . 
\end{cases}
\end{equation*}\\
Case (2b) :  If  $u_b+k<0,u_0+k<0$,
\begin{equation*}
(u(x,t),\sigma(x,t))=(u_0,\sigma_0).
\end{equation*} \\
Case (2c) : If $u_b+k<0,u_0+k>0$, \\
\begin{equation*}
(u(x,t), \sigma(x,t))=\begin{cases}
(\frac{x}{t} + k , -  k \frac{x}{t} +  \sigma_b- k (u_b +k)) &\text{if } 0<x<(u_0 + k)t , \\
(u_0,\sigma_0) &\text{if } x>(u_0 + k)t. 
\end{cases}
\end{equation*}\\

Case 3 : $(u_0,\sigma_0) \in S_1(u_b,\sigma_b)$. Note that, \eqref{e1.1} with\eqref{e1.2} and \eqref{e1.3} have a 1- shock solution if $(u_b, \sigma_b)$ and $(u_0,\sigma_0)$ are related by 
$\sigma_0=\sigma_b + k (u_0 - u_b)$ with $u_0 < u_b$.
We then have the following two subcases, \\
Case (3a) : If $\frac{u_b +u_0}{2}- k > 0$, 
\begin{equation*}
(u(x,t), \sigma(x,t))=\begin{cases}
(u_b,\sigma_b) &\text{if } 0< x< (\frac{u_b+u_0}{2}- k )t, \\
(u_0,\sigma_0) &\text{if }  x> (\frac{u_b+u_0}{2}- k )t.
\end{cases}
\end{equation*} \\
Case (3b) : If $\frac{u_b+u_0}{2}- k < 0$, 
\begin{equation*}
(u(x,t), \sigma(x,t))= (u_0,\sigma_0). 
\end{equation*} \\

Case 4: $(u_0,\sigma_0) \in S_2(u_b,\sigma_b)$.
Now, \eqref{e1.1} with\eqref{e1.2} and \eqref{e1.3} have a 2- shock solution if $(u_b, \sigma_b)$ and $(u_0,\sigma_0)$ are related by 
$\sigma_0=\sigma_b - k (u_0 - u_b)$ with $u_0 < u_b$.
We then have the following two subcases again, \\
Case (4a) : If $\frac{u_b+u_0}{2}+ k > 0$, \\
\begin{equation*}
(u(x,t), \sigma(x,t))=\begin{cases}
(u_b ,\sigma_b) &\text{if } 0< x< (\frac{u_b +u_0}{2}+k )t, \\
(u_0,\sigma_0) &\text{if }  x> (\frac{u_b+u_0}{2} + k )t.
\end{cases}
\end{equation*} \\
Case  (4b) :  If $\frac{u_b+u_0}{2}+ k < 0$, \\
\begin{equation*}
(u(x,t), \sigma(x,t))= (u_0,\sigma_0). 
\end{equation*} \\

Next we  analyse the cases when $(u_0,\sigma_0)$ lie between the correponding  rarefaction and shock waves. 
Recall that  $\Gamma_1$ denote the region between $R_1(u_b,\sigma_b)$ and  $R_2(u_b,\sigma_b)$,  $\Gamma_2$ be the region between $R_2(u_b,\sigma_b)$ and  $S_1(u_b,\sigma_b)$, $\Gamma_3$ be the region between $S_1(u_b,\sigma_b)$ and  $S_2(u_b,\sigma_b)$, $\Gamma_4$ be the region between $S_2(u_b,\sigma_b)$ and  $R_1(u_b,\sigma_b)$. We can now consider the following four cases. \\

Case 5:   When $(u_0,\sigma_0) \in \Gamma_1(u_b,\sigma_b)$.
Here we need an intermediate state $(u_*,\sigma_*)$ such that $(u_b,\sigma_b)$ is connected to  $(u_*,\sigma_*)$  by a 1-rarefaction wave from left to right and  $(u_*,\sigma_*)$   is connected to  $(u_0,\sigma_0)$ by a 2-rarefaction wave from left to right. Now the solution to the algebraic equations for  $R_1(u_b,\sigma_b)$ and  $R_2(u_*,\sigma_*)$ give the state  
$(u_*,\sigma_*)$ namely,
\[
\sigma_{*}=\sigma_b + k (u_{*} - u_b),  \sigma_0=\sigma_{*} - k (u_0 - u_{*})
\]
which gives 
\[
u_{*} = \frac{\sigma_0 - \sigma_b}{2 k}+\frac{u_0 + u_b}{2},\,\,\,\sigma_{*} = \frac{\sigma_0 + \sigma_b}{2 }+\frac{k}{2}(u_0 + u_b).
\]
We describe the solution for each of the following 3 cases, \\
Case (5a) : $u_b-k>0,u_0+k>0,$ \\
\begin{equation*}
(u(x,t), \sigma(x,t))=\begin{cases}
(u_b,\sigma_b) &\text{if  } 0<x<(u_b - k)t, \\ 
(\frac{x}{t} +k , k \frac{x}{t} +  \sigma_b- k (u_b -k)) &\text{if } (u_b - k)t<x<(u_* - k)t ,x>0 ,\\
(u_*,\sigma_*)  &\text{if } (u_* - k)t<x<(u_* + k)t,\\
(\frac{x}{t} -k , - k \frac{x}{t} +  \sigma_*+k (u_* -k)) &\text{if } (u_* + k)t<x<(u_0 + k)t ,x>0, \\
(u_0,\sigma_0) &\text{if } x>(u_0 + k)t. 
\end{cases}
\end{equation*}\\
Case (5b) :  $u_b - k<0,u_0 + k<0,$ \\
\begin{equation*}
(u(x,t),\sigma(x,t))=(u_0,\sigma_0).
\end{equation*} \\
Case (5c) :  $u_b - k< 0,u_0 + k>0,$ \\
Subcase (5c i ) $u_* - k >0,u_* + k>0,$
\begin{equation*}
(u(x,t), \sigma(x,t))=\begin{cases}
(\frac{x}{t} +k , k \frac{x}{t} +  \sigma_b- k (u_b -k)) &\text{if } 0<x<(u_* - k)t ,x>0, \\
(u_*,\sigma_*)  &\text{if} (u_* - k)t<x<(u_* + k)t, \\
(\frac{x}{t} -k , - k \frac{x}{t} +  \sigma_*+k (u_* -k)) &\text{if } (u_* + k)t<x<(u_0 + k)t ,x>0, \\
(u_0,\sigma_0) &\text{if } x>(u_0 + k)t. 
\end{cases}
\end{equation*}\\
Subcase (5c ii) $u_* - k <0,u_* + k>0,$
\begin{equation*}
(u(x,t), \sigma(x,t))=\begin{cases}
(u_*,\sigma_*)  &\text{if } 0<x<(u_* + k)t, \\
(\frac{x}{t} -k , - k \frac{x}{t} +  \sigma_*+k (u_* -k)) &\text{if } (u_* + k)t<x<(u_0 + k)t ,x>0, \\
(u_0,\sigma_0) &\text{if } x>(u_0 + k)t. 
\end{cases}
\end{equation*}\\
Subcase (5c iii) $u_* - k <0,u_* + k<0,$
\begin{equation*}
(u(x,t), \sigma(x,t))=\begin{cases}
(\frac{x}{t} -k , - k \frac{x}{t} +  \sigma_*+k (u_* -k)) &\text{if } 0<x<(u_0 + k)t,  \\
(u_0,\sigma_0) &\text{if } x>(u_0 + k)t. 
\end{cases}
\end{equation*}\\

Case 6: When $(u_0,\sigma_0) \in \Gamma_2(u_b,\sigma_b)$.
Here we need an intermediate state $(u_*,\sigma_*)$ such that $(u_b,\sigma_b)$ is connected to  $(u_*,\sigma_*)$  by a 1-shock wave from left to right and  $(u_*,\sigma_*)$   is connected to  $(u_0,\sigma_0)$ by a 2-rarefaction wave from left to right. Now the solution to the algebraic equations for  $S_1(u_b,\sigma_b)$ and  $R_2(u_*,\sigma_*)$ give the state  $(u_*,\sigma_*)$ namely,
\[
\sigma_*=\sigma_b + k (u_* - u_b),  \sigma_0=\sigma_* - k (u_0 - u_*)
\]
which  gives
\[
 u_* = \frac{\sigma_0 - \sigma_b}{2 k}+\frac{u_0 + u_b}{2},\sigma_* = \frac{\sigma_0 + \sigma_b}{2 }+\frac{k}{2}(u_0 + u_b).
\]
We describe the solution for each of the following 3 cases,\\
Case (6a):  If $\frac{u_0 +u_b}{2} - k >0$ and  $u_0 + k>0,$ \\
\begin{equation*}
(u(x,t),\sigma(x,t))=\begin{cases}
(u_b,\sigma_b) &\text{if } 0<x< (\frac{u_0 +u_b}{2} - k)t, \\
(u_*,\sigma_*)  &\text{if } (\frac{u_0 +u_b}{2} - k)t <x<(u_* + k)t, \\
(\frac{x}{t} -k , - k \frac{x}{t} +  \sigma_*+k (u_* -k)) &\text{if } (u_* + k)t <x<(u_0 + k)t,  \\
(u_0,\sigma_0) &\text{if }  x> (u_0 + k)t. 
\end{cases}
\end{equation*}\\
Case (6b):  If $\frac{u_0 +u_b}{2} - k <0$ and  $u_0 + k<0,$\\
\[ 
(u(x,t),\sigma(x,t)) = (u_0,\sigma_0).
\]
Case(6c) :   $\frac{u_0 +u_b}{2} - k <0$ and  $u_0 + k> 0$. Here there are two subcases. \\
 Case (6c i ):  If $u_*+k>0,$ \\
\begin{equation*}
(u(x,t),\sigma(x,t))=\begin{cases}
(u_b,\sigma_b) &\text{if } 0<x<(u_* + k)t, \\
(\frac{x}{t} -k , - k \frac{x}{t} +  \sigma_*+k (u_* -k)) &\text{if } (u_* + k)t <x<(u_0 + k)t , \\
(u_0,\sigma_0) &\text{if }  x> (u_0 + k)t. 
\end{cases}
\end{equation*}\\
Case (6c ii):  If $u_*+k< 0,$ \\
\begin{equation*}
(u(x,t),\sigma(x,t))=\begin{cases}
(\frac{x}{t} -k , - k \frac{x}{t} +  \sigma_*+k (u_* -k)) &\text{if } 0 <x<(u_0 + k)t,  \\
(u_0,\sigma_0) &\text{if }  x> (u_0 + k)t. 
\end{cases}
\end{equation*}\\

Case 7:  $(u_0,\sigma_0) \in \Gamma_3(u_b,\sigma_b)$.
Here, we need an intermediate state $(u_*,\sigma_*)$ such that $(u_b,\sigma_b)$ is connected to  $(u_*,\sigma_*)$  by a 1-shock wave from left to right and  $(u_*,\sigma_*)$   is connected to  $(u_0,\sigma_0)$ by a 2-rarefaction wave from left to right. Now the solution to the algebraic equations for  $S_1(u_b,\sigma_b)$ and  $S_2(u_*,\sigma_*)$ give the state  $(u_*,\sigma_*)$ namely,
\[
\sigma_*=\sigma_b + k (u_* - u_b),  \sigma_0=\sigma_* - k (u_0 - u_*)
\]
and solving this  gives 
 \[
u_* = \frac{\sigma_0 - \sigma_b}{2 k}+\frac{u_0 + u_b}{2},\sigma_* = \frac{\sigma_0 + \sigma_b}{2 }+\frac{k}{2}(u_0 + u_b).
\] 
There are 3  subcases and we describe the solution for each of these cases.\\
 Case ( 7a):  $\frac{u_0 +u_b}{2} - k >0$ and $\frac{u_0 +u_b}{2} + k >0$. In this case, the solution is \\
\begin{equation*}
(u(x,t),\sigma(x,t))=\begin{cases}
(u_b,\sigma_b) &\text{if } 0<x< (\frac{u_0 +u_b}{2} - k)t ,\\
(u_*,\sigma_*)  &\text{if } (\frac{u_0 +u_b}{2} - k)t <x< (\frac{u_0 +u_b}{2} + k)t ,\\
(u_0,\sigma_0) &\text{if }  x>  (\frac{u_0 +u_b}{2} +k)t.
\end{cases}
\end{equation*}\\
 Case (7b):  $\frac{u_0 +u_b}{2} - k <0$ and $\frac{u_0 +u_b}{2} + k <0$. In this case, solution is 
\[
(u(x,t),\sigma(x,t))= (u_0,\sigma_0).
\]
 Case ( 7c):   $\frac{u_0 +u_b}{2} - k <0$ and $\frac{u_0 +u_b}{2} + k >0,$ \\
\begin{equation*}
(u(x,t),\sigma(x,t))=\begin{cases}
(u_*,\sigma_*)  &\text{if } 0<x< (\frac{u_0 +u_b}{2} + k)t, \\
(u_0,\sigma_0) &\text{if  }  x>  (\frac{u_0 +u_b}{2} +k)t.
\end{cases}
\end{equation*}\\

Case 8:   $(u_0,\sigma_0) \in \Gamma_4(u_b,\sigma_b)$.
Here, we need an intermediate state $(u_*,\sigma_*)$ such that $(u_b,\sigma_b)$ is connected to  $(u_*,\sigma_*)$  by a 1-rarefaction wave from left to right and  $(u_*,\sigma_*)$   is connected to  $(u_0,\sigma_0)$ by a 2-shock wave from left to right. Now the solution to the algebraic equations for  $R_1(u_b,\sigma_b)$ and  $S_2(u_*,\sigma_*)$ give the state  $(u_*,\sigma_*)$ namely,
\[\sigma_*=\sigma_b + k (u_* - u_b),  \sigma_0=\sigma_* - k (u_0 - u_*)
\]
Solving for $(u_*,\sigma_*)$  gives 
\[
u_* = \frac{\sigma_0 - \sigma_b}{2 k}+\frac{u_0 + u_B}{2},\sigma_* = \frac{\sigma_0 + \sigma_b}{2 }+\frac{k}{2}(u_0 + u_b).
\]
There are 3 subcases to consider. We describe the solution for each of the following 3 cases, as follows.\\
Case(8a):  $u_b-k>0, \frac{u_0 +u_b}{2} + k > 0,$ \\
\begin{equation*}
(u(x,t), \sigma(x,t)) =\begin{cases}
(u_b,\sigma_b) &\text{if  } 0<x<(u_b - k)t, \\ 
(\frac{x}{t} +k , k \frac{x}{t} +  \sigma_b- k (u_b -k)) &\text{if } (u_b - k)t<x<(u_* - k)t ,x>0, \\
(u_*,\sigma_*)  &\text{if } (u_* - k)t<x<(\frac{u_0 +u_b}{2} + k )t,\\
(u_0,\sigma_0) &\text{if  } x>(\frac{u_0 +u_b}{2} + k )t.
\end{cases}
\end{equation*}\\
Case (8b): $u_b - k<0,(\frac{u_0 +u_b}{2} + k ) <0,$ \\
\begin{equation*}
(u(x,t),\sigma(x,t))=(u_0,\sigma_0).
\end{equation*} \\
Case (8c) : $u_b - k< 0,(\frac{u_0 +u_b}{2} + k ) >0$ . Here there are two cases to consider.
Subcase (8c i): $u_* - k >0,$
\begin{equation*}
(u(x,t), \sigma(x,t))=\begin{cases}
(\frac{x}{t} +k , k \frac{x}{t} +  \sigma_b- k (u_b -k)) &\text{if } 0<x<(u_* - k)t,  \\
(u_*,\sigma_*)  &\text{if } (u_* - k)t<x<(\frac{u_0 +u_b}{2} + k )t, \\
(u_0,\sigma_0) &\text{if  } x>(\frac{u_0 +u_b}{2} + k )t. 
\end{cases}
\end{equation*}\\
Subcase (8c ii): $u_* - k <0,$
\begin{equation*}
(u(x,t), \sigma(x,t))=\begin{cases}
(u_*,\sigma_*)  &\text{if } 0<x<(\frac{u_0 +u_b}{2} + k )t, \\
(u_0,\sigma_0) &\text{if  } x>(\frac{u_0 +u_b}{2} + k )t .
\end{cases}
\end{equation*}\\
\end{proof}

\section {Remarks}
The vanishing viscosity analysis of our previous paper \cite{d1},  was limited to the case where $(u_0,\sigma_0)$ lie in level sets one of the Riemann invariants, that is,
$w_j(u,\sigma)=w_j(u_b,\sigma_b)$, where $w_j(u,\sigma)=\sigma-(-1)^j k u,  j=1,2$. In the present considerations, this means that  $(u_0,\sigma_0)$ lies in one of the wave curves ,
$(u_0,\sigma_0) \in R_1(u_{b},\sigma_{b})\cup S_1(u_{b},\sigma_{b})$ or $(u_0,\sigma_0) \in R_2(u_{b},\sigma_{b})\cup S_2(u_{b},\sigma_{b})$. 

We have seen in \cite{d1}, that if  $w_j(u_0,\sigma_0)=w_j(u_b,\sigma_b)$, the vanishing viscosity solution $(u,\sigma)$  with initial data $(u_0,\sigma_0)$ at $t=0$
 and boundary data $(u_b,\sigma_b)$ at $x=0$,  have the following form.

{Case 1: $u_0 -(-1)^{j+1}k =u_b -(-1)^{j+1}k >0$,} 
\[
(u(x,t),\sigma(x,t)) =
(u_0-(-1)^{j+1}k,k[-(-1)^{j+1}k+u_0]+c_j) ,\,\,\,if \,\,\,x
>0.
\]

{Case 2: $u_0 -(-1)^{j+1}k=u_b -(-1)^{j+1}k<0$,}
\[
(u(x,t),\sigma(x,t) =
(u_0-(-1)^{j+1}k,k[-(-1)^{j+1}k+u_0]+c_j).
\]

{Case 3: $0<u_b -(-1)^{j+1}k<u_0-(-1)^{j+1}k$,}
\[
(u(x,t),\sigma(x,t)) = \begin{cases} \displaystyle
{(u_b-(-1)^{j+1}k,k[-(-1)^{j+1}k+u_b]+c_j),\,\,\, if
\,\,\,x<u_b t}
\\\displaystyle
{ (x/t-(-1)^{j+1}k, k( x/t-(-1)^{j+1}k )+c_j    ),\,\,\, if \,\,\,u_b t< x<u_0
t}\\\displaystyle{(u_0- (-1)^{j+1}k,k[-(-1)^{j+1}k+u_0]+c_j),\,\,\, if \,\,\,
x>u_0 t}.
\end{cases}
\]
{Case 4:  $u_b -(-1)^{j+1}k<0<u_0-(-1)^{j+1}k$,}
\[
(u(x,t),\sigma(x,t)) = \begin{cases} \displaystyle
{(x/t-(-1)^{j+1}k, k[-(-1)^{j+1}k+ x/t]+c_j),\,\,\, if
\,\,\,0< x<u_0 t}
\\\displaystyle
{(u_0- (-1)^{j+1}k,k[u_0- (-1)^{j+1}k]+c_j),\,\,\, if \,\,\,x>u_0 t}.\end{cases}
\]
{Case 5: $u_b-(-1)^{j+1}k<0$ and $u_0-(-1)^{j+1}k\leq 0$,}
\[
(u(x,t),\sigma(x,t)) = (u_0 -(-1)^{j+1}k,k[-(-1)^{j+1}k+u_0]+c_j).
\]
{Case 6: $u_0-(-1)^{j+1}k<u_b-(-1)^{j+1}k$ and $(u_b-(-1)^{j+1}k)+(u_0 -(-1)^{j+1}k)>0$ }
\[
(u(x,t),\sigma(x,t)) = \begin{cases} \displaystyle
{(u_b- (-1)^{j+1}k,k[-(-1)^{j+1}k+u_b]+c_j),\,\,\, if
\,\,\,x<s_jt}
\\\displaystyle
{ (u_b-(-1)^{j+1}k,k[-(-1)^{j+1}k+u_b]+c_j),\,\,\, if
\,\,\,x>s_jt},
\end{cases}
\]
where $s_j=\frac{u_0 +u_b }{2}-(-1)^{j+1}k$ and $c_j=w_j(u_b,\sigma_b)$.

These formulae agree with the one constructed in theorem (2.1) for these special cases, using a different method. More over  the theorem (2.1) treats general data with no such restrictions or smallness conditions. \\

\end{document}